\newtheorem{theorem}{Theorem}
\theoremstyle{plain}
\newtheorem{corollary}{Corollary}
\newtheorem{definition}{Definition}
\newtheorem{proposition}{Proposition}
\newtheorem{remark}{Remark}
\numberwithin{equation}{section}
\begin{document}
\title[Notes on non-commutative ergodic theorems]{Notes on ergodic theorems in non-commutative symmetric spaces}
%\author{Chilin}
\author{Genady Ya. Grabarnik}
\address[GYaG]
{St Johns University, Queens, NY, USA}%
\email[GYaG]{grabarng@stjohns.edu}%
%\urladdr{http://www.authorone.uni-aone.de}
%\author{Semen}
%\curraddr[A.~Two]{Author Two current address, line 1\newline%
%\indent Author Two current address, line 2}%
%\email[A.~Two]{author-two@authortwo-inst.hu}%
%\urladdr{http://www.authortwo.uni-atwo.hu}
%\address[A. Three]{Author Three address, line 1\newline%
%\indent Author Three address, line 2}
%\email[A.~Three]{author-three@authorthree-inst.edu}%
%\urladdr{http://www.authorthree.uni-athree.edu}
%\thanks{Thanks for Author One.}
%\thanks{Thanks for Author Two.}
%\thanks{This paper is in final form and no version of it will be submitted for
%publication elsewhere.}
\date{March 1, 2016}
\subjclass{Primary 05C38, 15A15; Secondary 05A15, 15A18} %
\keywords{Ergodic theory, Operator algebras, re-arrangement invariant spaces}%
%\dedicatory{Dedicated to Professor XY on the occasion of his seventieth birthday.}

\begin{abstract}
In this paper we establish individual ergodic theorem 
for positive kernels (or so called Danford Shwartz (DS+) operators acting on non commutative
symmetric spaces.
\end{abstract}
\maketitle

\section{Introduction}
The goal of the paper is to see that one of the results by Veksler \cite{ve} or Muratov, Pashkova and Rubshtein \cite{rmp1} remains valid for the non commutative case.  
\footnote{
It become known to author that Litvinov and Chilin also saw this result at the same time and wrote it at the same time.
I suggested to them to combine results and names on paper. Waiting for the answer.
}
Let $M, \tau$ be a semifinite von Neumann algebra with semifinite normal faithful trace $\tau$.

In addition we assume that $M,\tau$ satisfy homogeneity property, it may be presented in the form of the resonant property on trace, see for example \cite{bs}.

The space of all measurable operators affiliated with $M, \tau$ in the Sigal \cite{se} sense is denoted by $L_0$, see for details \cite{se, ddp, ddp1}.

Notions of $L_1(M,\tau)$ and $L_\infty(M,\tau)$ was naturally introduced in the same paper.

Since we fix algebra $M$ and trace $\tau$, we omit them from the notations from now on.

Let $F\in M$ be a set of finite linear combinations of orthogonal projections with finite trace. 

Space $R_0=(L_1+L_\infty)_0$ is the closure of $F$ in the norm $\|x\| = inf \{\|x_1\|_1+\|x_2\|_\infty, | x=x_1+x_2,\  x_1\in\L_1,  x_2\in\L_\infty\}  $.

\begin{remark}
The space $R_0$ is not necessary separable,
it is sufficient to consider  $M=B(H)$  of all bounded operators in the Hilbert space $H$ with not separable $H$ with natural trace $\tau$. 
\end{remark}

\begin{definition}
Non-commutative re-arrangement invariant (or symmetric) space $L$ for the fully symmetric case were introduced by Yeadon in \cite{ye2}.  
For the definition of the symmetric spaces we refer to the recent book of Lord, Sukochev, Zanin on singular traces, with original proofs due to Kalton and Sukochev.
\end{definition}

\begin{definition}
Re-arrangement invariant space $L$ over $(M,\tau)$ is called minimal if $F$ is dense in $L$ by norm of $L$.
\end{definition}

\section{Embedding theorem}
The following refinement of the embedding theorem \cite{kps} take place.

\begin{theorem}
\label{th:1}
Let $L$ be a re-arrangement invariant space over $(M,\tau)$.
Then 
\begin{enumerate}
	\item If $L$ is minimal, then 
	$$L_1\cap L_\infty\subseteq L\subseteq R_0$$
	\item If $L$ is not minimal $L$, then
	$$L_\infty\subseteq L\subseteq L_1+L_\infty$$
\end{enumerate}
\end{theorem}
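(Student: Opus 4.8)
The plan is to prove the four inclusions by reducing, as far as possible, to the classical (commutative) embedding theorem for rearrangement invariant spaces via the generalized singular value function $\mu(\cdot)$, and then to handle the two endpoint cases (minimal / non-minimal) by a density and a closed-graph argument respectively.

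\medskip

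First I would recall the basic machinery: for $x \in L_0$ one has the distribution function and the generalized singular value function $\mu_t(x)$, and the key fact that an operator $x$ belongs to a symmetric space $L$ if and only if $\mu(x) \in \widehat L$, the corresponding commutative rearrangement invariant space on $(0,\tau(1))$, with $\|x\|_L = \|\mu(x)\|_{\widehat L}$; moreover $x \mapsto \mu(x)$ is onto in the appropriate sense because of the resonance/homogeneity assumption on $(M,\tau)$. Granting this, the inclusions $L_1 \cap L_\infty \subseteq L$ and $L_\infty \subseteq L$ are immediate from the corresponding statements in the commutative theory applied to $\widehat L$: every symmetric space contains the characteristic functions of finite-measure sets, hence contains $\widehat{L_\infty} = L_\infty(0,\tau(1))$ if $\tau(1) < \infty$, and in general contains $\widehat{L_1 \cap L_\infty}$; pulling back through $\mu$ gives the operator statement, and the norm estimate is the standard one $\|x\|_L \le C(\|x\|_1 + \|x\|_\infty)$ coming from the fundamental function.

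\medskip

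For the upper inclusions I would argue as follows. In case (2), take any $x \in L$; then $\mu(x) \in \widehat L$, and since every commutative symmetric space over a measure space embeds continuously into $L_1 + L_\infty$, we get $\mu(x) \in L_1(0,\tau(1)) + L_\infty(0,\tau(1))$ with $\|\mu(x)\|_{L_1+L_\infty} \le C \|\mu(x)\|_{\widehat L}$; decomposing $\mu(x) = f_1 + f_\infty$ and using functional calculus (spectral decomposition of $|x|$ with the change of variables given by $\mu$) produces a decomposition $x = x_1 + x_2$ with $x_1 \in L_1$, $x_2 \in L_\infty$, whence $x \in L_1 + L_\infty$. The continuity of the embedding $L \hookrightarrow L_1 + L_\infty$ then follows either from this norm estimate directly or from the closed graph theorem, using that convergence in $L_1+L_\infty$ implies convergence in measure and that $L$ has the Fatou-type property making it continuously embedded once the algebraic inclusion is known. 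In case (1), minimality says $F$ is $\|\cdot\|_L$-dense in $L$; since $F \subseteq R_0$ by definition and $R_0$ is by construction the $\|\cdot\|_{L_1+L_\infty}$-closure of $F$, it suffices to show the inclusion map $(F, \|\cdot\|_L) \to (F, \|\cdot\|_{L_1+L_\infty})$ is bounded, which is exactly the content of the lower inclusion's norm estimate combined with $\|x\|_{L_1+L_\infty} \le \|x\|_L$ valid on $F$; then $L = \overline{F}^{\,\|\cdot\|_L}$ maps into $\overline{F}^{\,\|\cdot\|_{L_1+L_\infty}} = R_0$.

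\medskip

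The main obstacle I anticipate is the non-commutative transfer step: namely justifying that membership and the decomposition in $L_1+L_\infty$ can be read off from $\mu(x)$ alone, and in particular constructing the operator decomposition $x = x_1 + x_2$ from the scalar decomposition of $\mu(x)$ in a trace-preserving way — this is where the homogeneity/resonance hypothesis on $(M,\tau)$ is genuinely used, and where one must be careful that $x$ need not be positive (one reduces to $|x|$ via polar decomposition, noting $\mu(x) = \mu(|x|)$, and then uses the spectral measure of $|x|$ together with the partial isometry from the polar decomposition). A secondary technical point is the case distinction $\tau(1) < \infty$ versus $\tau(1) = \infty$, since when the trace is finite $L_\infty \subseteq L_1$ and the two displayed chains partially collapse; I would dispose of that at the start. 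Everything else is bookkeeping with the fundamental function $\varphi_L(t) = \|\chi_{[0,t]}\|_{\widehat L}$ and its reciprocal, exactly as in \cite{kps}.
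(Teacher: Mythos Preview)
The paper does not actually prove this theorem: its entire proof reads ``Proof is given in the forthcoming paper of author and some co-authors.'' There is therefore nothing substantive in the paper to compare your proposal against.

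On its own merits, your sketch follows the natural route and is broadly sound, but two points deserve tightening. First, for the upper inclusion $L\subseteq L_1+L_\infty$ you do not need to lift a scalar decomposition of $\mu(x)$ back to an operator decomposition; it is cleaner to use the identification $\|x\|_{L_1+L_\infty}=\int_0^1\mu_t(x)\,dt$ (the $K$-functional at $t=1$), which transfers the commutative inclusion to the operator setting directly and sidesteps the resonance issue you flag. Second, and more seriously, the implication ``$L$ not minimal $\Rightarrow L_\infty\subseteq L$'' is not really explained: you appeal to the commutative theorem for $\widehat L$, but you must first argue that non-minimality of $L$ (in the sense that $F$ is not $\|\cdot\|_L$-dense) forces non-minimality of $\widehat L$, i.e.\ that $L$ contains some $x$ with $\mu_t(x)\not\to 0$ as $t\to\infty$, and then that rearrangement invariance puts a nonzero multiple of $\mathbf 1$ into $L$ when $\tau(\mathbf 1)=\infty$. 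This is the one place where the minimal/non-minimal dichotomy is genuinely used, and it needs an argument rather than a citation; everything else is, as you say, bookkeeping with the fundamental function as in \cite{kps}.
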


\begin{proof}
Proof is given in the forthcoming paper of author and some co-authors.
\end{proof}

\section{Individual ergodic theorem for minimal symmetric spaces}

\subsection{The Largest Minimal Symmetric Space}
The largest minimal symmetric space is $R_0=(L_1+L_\infty)_0$ is a 
set of all measurable integrable with trace operators plus bounded with not increasing re-arrangement 
functions decreasing to 0. 

The space $R_0$ is minimal symmetric space. 
Any minimal symmetric space is a subset of $R_0$.

\subsection{Positive double contraction on $R_0$}
Any positive kernel ($T\in DS^+$) leaves $R_0$ invariant.  

\subsection{Mean Ergodic Theorem}
Von Neumann Mean ergodic Theorem on $L_2$ follows from the general von Neumann ergodic theorem for the contractions on the general Hilbert spaces.

Mean convergence  on $L_1\cap L_\infty$ follows from the the fact that 
both $L_1$ and  $L_\infty$ are invariant under positive kernels. 
The space $L_1\cap L_\infty$ itself is invariant under action of positive kernel,
and positive kernel is a contraction of the space $L_1\cap L_\infty$. 
The von Neumann Ergodic theorem and the closedness of  $L_1\cap L_\infty$ in its norm implies mean ergodic theorem on $L_1\cap L_\infty$.

\subsection{Mean convergence on $R_0$} 
\begin{proposition}
\label{pro:1}
The Cesaro averages $S_n(T)x$ converge to some $\tilde{x}$ in norm of $R_0$.
\end{proposition}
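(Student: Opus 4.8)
The natural strategy is a density argument built on the mean ergodic theorem that the author has already secured on the dense subspace $L_1\cap L_\infty$. Let me think about the ingredients.

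1. We know $T$ is a positive Dunford-Schwartz operator, so it's a contraction on $L_1$ and on $L_\infty$ simultaneously. By interpolation (the $(L_1,L_\infty)$-interpolation property of r.i. spaces, which is essentially Calderón-Mityagin for the non-commutative case), $T$ is a contraction on $R_0$ with its norm $\|x\| = \inf\{\|x_1\|_1 + \|x_2\|_\infty\}$. Hence each Cesàro average $S_n(T) = \frac{1}{n}\sum_{k=0}^{n-1} T^k$ is also a contraction on $R_0$: $\|S_n(T)\|_{R_0 \to R_0} \le 1$.

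2. On $L_1\cap L_\infty$, the author has established (the "Mean convergence on $L_1\cap L_\infty$" subsection) that $S_n(T)x$ converges in the $L_1\cap L_\infty$ norm. Since the embedding $L_1\cap L_\infty \hookrightarrow R_0$ is continuous (indeed $\|x\|_{R_0} \le \|x\|_{L_1\cap L_\infty}$, or at least bounded), for $x \in L_1\cap L_\infty$ the averages $S_n(T)x$ are Cauchy in $R_0$ too, hence convergent there to some $\tilde x$.

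3. Now $F$ — and hence $L_1\cap L_\infty$, which contains $F$ — is dense in $R_0$ by definition ($R_0$ is the closure of $F$). So given arbitrary $x \in R_0$ and $\varepsilon > 0$, pick $y \in L_1\cap L_\infty$ with $\|x - y\|_{R_0} < \varepsilon$. Then the standard $3\varepsilon$-estimate
$$\|S_n(T)x - S_m(T)x\| \le \|S_n(T)(x-y)\| + \|S_n(T)y - S_m(T)y\| + \|S_m(T)(y-x)\| < 2\varepsilon + \|S_n(T)y - S_m(T)y\|$$
together with the contractivity from step 1 and the convergence from step 2 shows $(S_n(T)x)_n$ is Cauchy in $R_0$. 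Since $R_0$ is a Banach space it is complete, so $S_n(T)x \to \tilde x$ for some $\tilde x \in R_0$.

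**Expected main obstacle.** The routine density/$3\varepsilon$ machinery is easy; the substantive point is step 1 — that a positive Dunford-Schwartz operator, which by hypothesis contracts $L_1$ and $L_\infty$, actually contracts the intersection/sum space $R_0$ with its $K$-functional norm, and that this passes to $R_0$ (not merely to $L_1 + L_\infty$). This is where one needs the non-commutative Calderón-Mityagin interpolation property, which in turn is where the *homogeneity / resonant property* of $(M,\tau)$ assumed in the introduction earns its keep; without resonance the $K$-functional description of interpolation can fail. I would isolate this as a lemma (or cite it from \cite{bs} or \cite{ddp,ddp1}), and also note explicitly that $R_0$ being the closed subspace of $L_1+L_\infty$ generated by $F$ is $T$-invariant — which is exactly the content of the "Positive double contraction on $R_0$" subsection above — so the averages stay in $R_0$ and the limit $\tilde x$ does too. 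A secondary, more cosmetic issue is that $R_0$ need not be separable (per the Remark), but separability plays no role in the argument above, so this causes no trouble.
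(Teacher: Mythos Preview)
Your argument is correct and is the same density/$3\varepsilon$ scheme the paper uses; both show $(S_n(T)x)$ is Cauchy in $R_0$ by reducing to a mean ergodic theorem on a dense piece and then appeal to completeness of $R_0$.

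The execution differs in one small but instructive way. You approximate $x$ by an element of $L_1\cap L_\infty$, invoke the mean ergodic theorem there, and control the error via contractivity of $S_n(T)$ on $R_0$. The paper instead uses the defining property of $R_0$ directly: for each $n$ it writes $x=x_{1,n}+x_{2,n}$ with $x_{1,n}\in L_1$ and $\|x_{2,n}\|_\infty<2^{-n}$, applies Yeadon's $L_1$ mean ergodic theorem to $x_{1,n}$, and bounds the $x_{2,n}$ contribution trivially by $L_\infty$-contractivity. The two estimates are then combined in the sum norm. This sidesteps any separate statement about contractivity on $R_0$.

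In particular, what you flag as the ``main obstacle'' is not one. Contractivity of a $DS^+$ operator in the $L_1+L_\infty$ norm is immediate from the infimum definition: if $x=x_1+x_2$ then $Tx=Tx_1+Tx_2$ with $\|Tx_1\|_1+\|Tx_2\|_\infty\le\|x_1\|_1+\|x_2\|_\infty$, so $\|Tx\|_{R_0}\le\|x\|_{R_0}$. No Calder\'on--Mityagin interpolation, and no resonance/homogeneity hypothesis on $(M,\tau)$, is needed at this point; the paper's decomposition argument makes this transparent.
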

\begin{proof}
Follows from von Neumann ergodic theorem and the fact that $L_2$ is dense $R_0$.
Details.
We show that $s_l(T)x, l=1,2,...$ is fundamental sequence in $R_0$.
Indeed,  
each $x\in R_0$ may be presented as $x=x_{1,n}+x_{2,n}$ with $x_{1,n}\in L_1$ and $x_{2,n}\in L_\infty$ with $\|x_{2,n} \|_\infty<2^{-n}$.
Then we can apply von Neumann or Yeadon's Mean Ergodic theorem 4.2 \cite{ye2} for $L_1$ to $x_{1,n}$ and find $l(n)$ such that 
for $l,m\geq l(n)$ holds
$$
\|s_l(T)x_{1,n}-s_m(T)x_{1,n}\|_{L_1}<2^{-n}.
$$

Then for $l,m\geq l(n)$
$$
\|s_l(T)x-s_m(T)x\|_{R_0}
\leq \|s_l(T)x_{1,n}-s_m(T)x_{1,n}\|_{L_1}+
$$$$+\|s_l(T)x_{2,n}-s_m(T)x_{2,n}\|_{L_\infty}
\leq 4*2^{-n},
$$
and, hence, the sequence $s_l(T)x$ is fundamental.
Completeness of $R_0$ implies existence of $\tilde{x}\lim_{l\to\infty}s_l(T)x$.  
\end{proof}

\begin{remark}
Mean ergodic theorem for fully symmetric spaces is due to Yeadon  \cite{ye2}, Theorem 4.2.
Note that we do not require the space $L$ to be fully symmetric here.
Condition ii) of the theorem 4.2 \cite{ye2} means that the space $L$ is minimal.
The space $R_0$ does not satisfy condition iii) in theorem 4.2 \cite{ye2}.
\end{remark}

\subsection{Individual Ergodic Theorem in $L_1$}
Individual Ergodic theorem for $L_1$ was established by Yeadon \cite{ye1}, among other authors. 

\subsection{Individual ergodic theorem for $R_0$}
The goal of the section is to show double side almost everywhere convergence for 
the operators from the $R_0$.

\begin{definition}
The sequence $x_n$ from $L_0$ is called converging double side almost  everywhere to $x_0\in L_0$ if
for every $\epsilon>0$ there exist orthogonal projection $E\in M$ such that
$\tau(1-E)<\epsilon$, $E (x_n-x_0 )E\in M$ and $E (x_n-x_0 )E\to 0$.
\end{definition}

\begin{theorem}
Let $M,\tau, R_0$ are as above and $T$ is positive kernel on $M$.
For $x\in R_0$, Cesaro averages $S_n(T)x$ converge d.s.a.e. in  $R_0$.
\end{theorem}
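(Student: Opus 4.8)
The strategy is the classical one: establish a "maximal ergodic inequality" on a dense subset, then combine it with mean convergence (Proposition~\ref{pro:1}) via a Banach-principle type argument to upgrade to d.s.a.e. convergence on all of $R_0$. Concretely, I would proceed as follows.

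\emph{Step 1: Splitting off the $L_1$ part.} Fix $x\in R_0$. As in the proof of Proposition~\ref{pro:1}, for each $\delta>0$ write $x=x_1+x_2$ with $x_1\in L_1$, $x_2\in L_\infty$ and $\|x_2\|_\infty<\delta$. Since $T\in DS^+$, it is a contraction of $L_\infty$, so $\|S_n(T)x_2\|_\infty<\delta$ for all $n$; hence $E\,S_n(T)x_2\,E$ is uniformly small in $M$ for \emph{any} projection $E$. Thus it suffices to handle $x_1\in L_1$, for which the individual ergodic theorem in $L_1$ (Yeadon \cite{ye1}) already gives bilateral a.e.\ convergence of $S_n(T)x_1$ to some $\tilde x_1\in L_1$. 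The only subtlety is that a.e.\ convergence in Yeadon's $L_1$ sense must be matched with the d.s.a.e.\ definition used here; this is where the bulk of the work lies.

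\emph{Step 2: Maximal inequality and the limit identification.} I would invoke Yeadon's maximal ergodic inequality for $DS^+$ operators on $L_1$: for $x_1\in L_1^+$ and $\lambda>0$ there is a projection $e_\lambda\in M$ with $\tau(1-e_\lambda)\le \|x_1\|_1/\lambda$ and $\sup_n \|e_\lambda S_n(T)x_1\, e_\lambda\|_\infty\le \lambda$ (the standard bilateral form). Given $\epsilon>0$, choose $\delta$ and the splitting from Step~1 with, say, $\|x_1\|_1$ controlled; then pick $\lambda$ and a further projection via the maximal inequality so that on the compression by $e_\lambda$ the tail oscillations of $S_n(T)x_1$ are $<\epsilon$. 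Combining the uniform bound on the $x_2$-part with the maximal control on the $x_1$-part, one extracts a single projection $E$ with $\tau(1-E)<\epsilon$ on which $E S_n(T)x\, E$ is Cauchy in $M$-norm; its limit must coincide (after removing $E$) with the $R_0$-limit $\tilde x$ from Proposition~\ref{pro:1}, by uniqueness of limits in the measure topology.

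\emph{Step 3: Assembling the d.s.a.e.\ statement.} Run Step~2 with $\epsilon$ replaced by $\epsilon/2^k$ to obtain projections $E_k$; then $E=\bigwedge_k E_k$ (or a suitable diagonal choice) satisfies $\tau(1-E)<\epsilon$, $E(S_n(T)x-\tilde x)E\in M$, and $\|E(S_n(T)x-\tilde x)E\|_\infty\to 0$, which is exactly d.s.a.e.\ convergence.

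\emph{Main obstacle.} The hard part is Step~2: reconciling Yeadon's $L_1$ maximal/individual ergodic theorem — stated for a possibly different (one-sided or Egorov-type) notion of a.e.\ convergence — with the bilateral d.s.a.e.\ notion fixed in this paper, and doing so uniformly enough that the $L_1$-part and the (uniformly small) $L_\infty$-part can be captured by one common projection $E$. One must be careful that the maximal inequality is available in the \emph{two-sided} compressed form $\|e\,X\,e\|_\infty$, and that the resonant/homogeneity hypothesis on $(M,\tau)$ is what licenses passing between the function-space estimates and the operator compressions. Once the bilateral maximal inequality in $L_1$ is in hand, the rest is the routine density-plus-Banach-principle bookkeeping sketched above.
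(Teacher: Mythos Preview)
Your proposal is correct and follows essentially the same route as the paper: decompose $x$ into an $L_1$ piece and a uniformly small $L_\infty$ piece, control the $L_1$ part via Yeadon's $L_1$ individual ergodic theorem and maximal inequality, intersect the resulting projections over a sequence of such splittings to obtain a d.s.a.e.\ Cauchy statement, and then identify the limit through the $R_0$-norm convergence of Proposition~\ref{pro:1} (hence convergence in measure). The obstacle you flag is not a genuine one here---Yeadon's theorem, and the Chilin--Litvinov versions the paper cites, already yield the bilateral compressed form $\|e\,\cdot\,e\|_\infty$---and the paper's only extra wrinkle is a second splitting of the $L_1$ piece into an $L_1\cap L_\infty$ part (handled by the individual theorem) and a part of small $L_1$-norm (handled by the maximal inequality), which is bookkeeping rather than a different idea.
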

\begin{proof}
The proof follows the line of the proof of Proposition \ref{pro:1} and uses Yeadon's individual ergodic theorem for $L_1$ \cite{ye1}, see also Chilin, Litvinov \cite{cl1}, \cite{cl2}

We show that $s_l(T)x, l=1,2,...$ is fundamental d.s.a.e. sequence in $R_0$.
Indeed,  
each $x\in R_0$ may be presented as $x=x_{1,n}+x_{2,n}$ with $x_{1,n}\in L_1$ and $x_{2,n}\in L_\infty$ with $\|x_{2,n} \|_\infty<2^{-4*n}$.
In turn, $x_{1,n}=x_{1,1,n}+x_{1,2,n}$, with $x_{1,1,n}\in L_\infty$, $x_{1,2,n}\in L_1$ and
$\|x_{1,2,n}\|_{L_1}<2^{-8*n}$, $n=1,2,...$.

Then we can apply Yeadon's Individual Ergodic theorem 1, \cite{ye1} for $L_1$ to $x_{1,1,n}$ and find $l(n)$ and projector $E(n)\in M$ such that $\tau(1-E(n))<2^{-4*n}$ and
for $l,m\geq l(n)$ holds
$$
\|E(n)(s_l(T)x_{1,1,n}-s_m(T)x_{1,1,n})E(n)\|_{L_\infty}\}\to 0. 
$$
 
We can represent $x_{1,2,n} =\sum_{k=1}^\infty x_{2,n,k} $, with $x_{2,n,k}\in L_\infty$ and $\|x_{2,n,k}\|_{L_1}<2^{-8*(n+k)}$.

Then, we can find $E(1,n)=\wedge_k E(1,n,k)$, with $\tau(1- E(1,n))\leq 2^{-4*n}$ and
$E(1,n)s_l(x_{1, 2,n})E(1,n)\in L_\infty$ and $\|E(1,n)s_l(x_{2,n})E(1,n)\|_{ L_\infty}<2^{-4*n}$, 
where projections $E(1,n,k)$ are obtain by Theorem  1 from \cite{ye1} applied to $x_{2,n,k}$ and $\epsilon=2^{-4(n+k)}$.

By choosing $E(2,n)=\wedge_{k=1}^\infty E(1,n+k) \wedge \wedge_{k=1}^\infty E(n+k)$, we have
$\tau(1-E(2,n))<2^{-4*n}$.

Moreover, for $l,m\geq l(n)$
$$
\|E(2,n)(s_l(T)x-s_m(T)x)E(2,n)\|_{L_\infty}
$$$$
\leq \|E(2,n)(s_l(T)x_{1,n}-s_m(T)x_{1,n})E(2,n)\|_{L_\infty}+\|E(2,n)(s_l(T)x_{2,n}-s_m(T)x_{2,n})E(2,n)\|_{L_\infty}
$$$$
\leq \|E(2,n)(s_l(T)x_{1,1,n}-s_m(T)x_{1,1,n})E(2,n)\|_{L_\infty}+\|E(2,n)(s_l(T)x_{1,2,n}E(2,n)\|_{L_\infty}
$$$$
+\|E(2,n)s_m(T)x_{1,2,n}E(2,n)\|_{L_\infty}+2*2^{-4*n} \leq 8*2^{-n},
$$
and, hence, the sequence $s_l(T)x$ is fundamental d.s.a.e.
The sequence $s_l(T)x$ also converges in norm in $R_0$ to $\tilde{x}$, hence it converges in measure.
This implies convergence of $s_l(T)x$ d.s.a.e. to $\tilde{x}$.
\end{proof}

\begin{remark}
In the case when space $L$ is not minimal, it contains $M$.
Then it is possible to show \cite{rmp1}, that even in the commutative case, 
there exists ergodic automorphism of the space with measure such that 
ergodic averages do not converge almost everywhere.
\end{remark}

\begin{corollary}
Let $L$ be a minimal non-commutative symmetric space.
Let $T$ be a positive kernel such that $T$ leaves $L$ invariant and $T$ acts as contraction on $L$.
Then Cesaro averages $s_n(T)x$ converge in norm and d.s.a.e. for any $x\in L$.
\end{corollary}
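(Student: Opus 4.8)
The plan is to reduce the corollary to the already-established theorem for the universal space $R_0$ by exploiting the embedding furnished by Theorem~\ref{th:1}. Since $L$ is assumed minimal, part (1) of Theorem~\ref{th:1} gives the chain $L_1\cap L_\infty\subseteq L\subseteq R_0$. Thus every $x\in L$ is in particular an element of $R_0$, and the main theorem of the previous subsection applies directly to $x$: the Cesaro averages $s_n(T)x$ converge d.s.a.e.\ in $R_0$ to some limit $\tilde x\in R_0$. The only genuine content to check is that this convergence can be promoted back to convergence \emph{in the norm of $L$}, and that the limit $\tilde x$ in fact lies in $L$ rather than merely in the larger space $R_0$.

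For the norm convergence in $L$, I would run the same two-step splitting argument as in Proposition~\ref{pro:1}, but now using the hypothesis that $T$ acts as a contraction on $L$. Given $x\in L$ and $\epsilon>0$, use minimality of $L$ (density of $F$, hence of $L_1\cap L_\infty$, in $L$) to write $x = y + z$ with $y\in L_1\cap L_\infty$ and $\|z\|_L<\epsilon$. On $L_1\cap L_\infty$ the mean ergodic theorem already recorded in the subsection ``Mean Ergodic Theorem'' applies, so $s_n(T)y$ is $L_1\cap L_\infty$-fundamental, hence (by the continuous inclusion $L_1\cap L_\infty\subseteq L$) also $L$-fundamental; meanwhile $\|s_n(T)z\|_L\le\|z\|_L<\epsilon$ uniformly in $n$ because $T$, and therefore each average $s_n(T)$, is an $L$-contraction. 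A routine $3\epsilon$ estimate then shows $\{s_n(T)x\}$ is Cauchy in $L$. Completeness of $L$ yields a limit in $L$; since $L$-norm convergence implies convergence in measure, and the $R_0$-theorem already gives convergence in measure to $\tilde x$, the two limits coincide, so $\tilde x\in L$ and $s_n(T)x\to\tilde x$ in $\|\cdot\|_L$.

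Finally, for the d.s.a.e.\ convergence in $L$: the d.s.a.e.\ limit is a statement about the operators $E(s_n(T)x-\tilde x)E$ converging in $M$ for suitable projections $E$ with $\tau(1-E)$ small, which is purely a statement at the level of $L_0$ (measurable operators) and does not see the norm of $L$. Since $L\subseteq R_0\subseteq L_0$, the d.s.a.e.\ convergence $s_n(T)x\to\tilde x$ established by the main theorem in $R_0$ is \emph{verbatim} the required d.s.a.e.\ convergence in $L$, once we know $\tilde x\in L$, which was just shown. Hence both assertions follow.

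The main obstacle I anticipate is purely bookkeeping rather than conceptual: one must be careful that the inclusions $L_1\cap L_\infty\subseteq L\subseteq R_0$ from Theorem~\ref{th:1} are \emph{continuous} embeddings (not merely set-theoretic), so that $L_1\cap L_\infty$-fundamental sequences are $L$-fundamental and $L$-norm convergence implies convergence in measure; this continuity is implicit in the symmetric-space framework but should be invoked explicitly. A secondary point worth a sentence is that the contraction hypothesis on $T$ is exactly what is needed to control the ``tail'' term $s_n(T)z$ uniformly — without it the reduction to the dense subspace $L_1\cap L_\infty$ would fail, which is why the corollary must assume it as a hypothesis rather than deriving it (unlike the $R_0$ case, where the interpolation structure forces $T$ to be an $R_0$-contraction automatically).
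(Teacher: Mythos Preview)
Your proposal is correct and follows essentially the same route as the paper: for d.s.a.e.\ convergence you invoke the embedding $L\subseteq R_0$ from Theorem~\ref{th:1} and appeal to the $R_0$ theorem, and for norm convergence you use density of $L_1\cap L_\infty$ in $L$ together with the mean ergodic theorem on $L_1\cap L_\infty$ and the contraction hypothesis on $L$ to run a $3\epsilon$ approximation. You are in fact more explicit than the paper about the continuity of the inclusions and about why the limit lies in $L$, but the underlying strategy is identical.
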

\begin{proof}
Since the space $L$ is minimal, the set of $L_1\cap L_\infty$ is dense in $L$. 
Since $L_1\cap L_\infty\subseteq L$, hence 
$$\|x\|_{L_1\cap L_\infty}\geq C* \|x\|_L$$
for any $x\in L_1\cap L_\infty$,
which in turn implies convergence of Cesaro averages of   $s_n(T)x$ in norm of $L$
for $x\in  L_1\cap L_\infty$.
Fix real $\epsilon>0$. Find $x_k\in  L_1\cap L_\infty$ with $\|x-x_k\|_L<\epsilon/2$. Then Cesaro averages are 
$s_n(T)x$ are within $\epsilon$ of the $\tilde{x_k}$, where $\tilde{x_k}=\lim_{n\to\infty} s_n(T)x_k$.
This implies norm convergence of   $s_n(T)x$.

The d.s.a.e. convergence follows from the embedding theorem \ref{th:1}, since a minimal re-arrangement invariant non-commutative function space $L$ is a subspace of $R_0$.
\end{proof}

\begin{corollary}
\label{col:1}
Let $L$ be a minimal fully symmetric space.
Let $T$ be a positive kernel on $(M,\tau)$. Then $T$ leaves $L$ invariant and act on $L$ as contraction. 
Moreover,  the Cesaro averages $S_n(T)x$ converge d.a.e. for any $x\in L$.
\end{corollary}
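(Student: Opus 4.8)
The plan is to reduce this corollary to the preceding one by verifying its two standing hypotheses, namely that $T$ leaves $L$ invariant and that $T$ acts on $L$ as a contraction; once these are in place, the d.a.e.\ (indeed d.s.a.e.) convergence is immediate from the previous corollary together with Theorem~\ref{th:1}.

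First I would recall the defining feature of a \emph{fully} symmetric space: its norm is monotone with respect to Hardy--Littlewood submajorization of generalized singular value functions, i.e.\ if $\int_0^s \mu_t(a)\,dt \le \int_0^s \mu_t(b)\,dt$ for all $s>0$ (written $a \prec\prec b$) and $b \in L$, then $a \in L$ and $\|a\|_L \le \|b\|_L$. This is the non-commutative Calder\'on--Mityagin property; see Dodds--Dodds--de~Pagter or the book of Lord--Sukochev--Zanin referred to above. Next I would invoke the standard submajorization estimate for Dunford--Schwartz operators: if $T \in DS^+$ then $Tx \prec\prec x$ for every $x \in L_1 + L_\infty$, which follows from $\|Tx\|_1 \le \|x\|_1$, $\|Tx\|_\infty \le \|x\|_\infty$ and the $K$-functional description $\int_0^s \mu_t(x)\,dt = K(s,x;L_1,L_\infty)$. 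Combining the two facts gives at once $T(L) \subseteq L$ and $\|Tx\|_L \le \|x\|_L$ for every $x \in L$; hence $T$ is a positive contraction of $L$, and so are the Cesàro averages $S_n(T)$.

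With the hypotheses of the preceding corollary thus verified, that corollary applies verbatim: since $L$ is minimal, $L_1 \cap L_\infty$ is dense in $L$, so norm convergence of $S_n(T)x$ for $x \in L_1 \cap L_\infty$ propagates to all of $L$; and since by Theorem~\ref{th:1}(1) a minimal symmetric space satisfies $L \subseteq R_0$, the d.s.a.e.\ convergence already established on $R_0$ restricts to $L$. In particular $S_n(T)x$ converges d.a.e.\ for every $x \in L$, which is the assertion.

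The only genuine content beyond bookkeeping is the first step --- that a positive Dunford--Schwartz operator automatically contracts every fully symmetric space in the non-commutative setting. I expect this to be the main point to get right: one must check that the notion of ``fully symmetric'' adopted here is exactly the one yielding Calder\'on--Mityagin monotonicity (not merely ``symmetric''), and that the submajorization $Tx \prec\prec x$ is available for all $x \in L_1 + L_\infty$ rather than only for $x \in L_1$ or $x \in M$. Once these two facts are secured, everything else is an immediate appeal to the previous corollary and to Theorem~\ref{th:1}.
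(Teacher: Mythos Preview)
Your proposal is correct and matches the paper's intended argument. The paper states Corollary~\ref{col:1} without proof, but the reasoning you supply is exactly what is implicit there and is spelled out explicitly in the paper's proof of the next corollary (on Orlicz spaces): ``fully symmetric'' gives the interpolation/submajorization property, hence $T$ leaves $L$ invariant and acts as a contraction, and then one is in the setting of the preceding corollary.
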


\begin{corollary} (see Chilin Litvinov \cite{cs2}).
 Let $L_\Psi$ be a non-commutative Orlicz space with function $\Psi$ satisfying conditions $\delta_2$ and $\Delta_2$.
Let $T$ be a positive kernel. Then the Cesaro averages $S_n(T)x$ converge d.s.a.e. for any $x\in L$.
\end{corollary}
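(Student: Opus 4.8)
The plan is to reduce this to the ergodic theorem already established for minimal fully symmetric spaces, namely Corollary~\ref{col:1}. The argument has two structural ingredients about $L_\Psi$ and then a single application of that corollary.

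First I would record that $L_\Psi$, equipped with the Luxemburg norm $\|x\|_\Psi=\inf\{\lambda>0:\tau(\Psi(|x|/\lambda))\le 1\}$, is a fully symmetric space over $(M,\tau)$: the norm depends on $x$ only through the singular value function $\mu(x)$, and it has the Hardy--Littlewood majorization property, so that $\mu(y)\prec\prec\mu(x)$ with $x\in L_\Psi$ forces $y\in L_\Psi$ and $\|y\|_\Psi\le\|x\|_\Psi$. In particular every positive kernel $T$ (a $DS^+$ operator) satisfies $\mu(Tx)\prec\prec\mu(x)$, hence leaves $L_\Psi$ invariant and acts on it as a contraction.

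Second --- and this is the crux --- I would show that the hypotheses $\delta_2$ (doubling of $\Psi$ near $0$) and $\Delta_2$ (doubling near $\infty$) together force $L_\Psi$ to be \emph{minimal}, i.e.\ that $F$ is $\|\cdot\|_\Psi$-dense. The two conditions combine to a global estimate $\Psi(2t)\le K\Psi(t)$ for all $t>0$, hence $\Psi(\alpha t)\le C_\alpha\Psi(t)$ for every $\alpha>1$. Using this one shows the modular $x\mapsto\tau(\Psi(|x|))$ is finite on all of $L_\Psi$ and is continuous along the natural truncations $x_{s,r}=e^{[0,s)}(|x|)\,x\,g_r$, where $e^{[0,s)}(|x|)$ is the spectral projection of $|x|$ below level $s$ and $g_r$ a projection cutting the trace-support down to finite trace: $\tau(\Psi(|x-x_{s,r}|))\to 0$ as $s\to\infty$, $r\to 0$, by a non-commutative dominated convergence argument for the trace, and then modular convergence upgrades to $\|x-x_{s,r}\|_\Psi\to 0$ again via the doubling bound. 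Since $x_{s,r}\in L_1\cap L_\infty\subseteq\overline{F}$, density of $F$ follows, so $L_\Psi$ is minimal.

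Combining the two ingredients, $L_\Psi$ is a minimal fully symmetric space on which $T$ is a contraction, and Corollary~\ref{col:1} (equivalently the preceding corollary together with the embedding Theorem~\ref{th:1}) gives d.s.a.e.\ convergence of $S_n(T)x$ for every $x\in L_\Psi$. The main obstacle I expect is precisely the minimality step: translating the scalar doubling conditions $\delta_2,\Delta_2$ into order-continuity of the norm and then into norm density of $F$ requires the non-commutative Fatou / dominated convergence machinery for $\tau$ and the careful passage between modular and norm topologies, which is where the commutative proof genuinely needs to be adapted to the operator setting.
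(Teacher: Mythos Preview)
Your proposal is correct and follows essentially the same route as the paper: establish that $L_\Psi$ is a minimal fully symmetric space and then invoke Corollary~\ref{col:1}. The only difference is cosmetic---the paper obtains minimality under $\delta_2$ and $\Delta_2$ by citing \cite{bs, kps, ssmf} and gets the contraction property via interpolation, whereas you sketch the minimality argument explicitly and use Hardy--Littlewood majorization directly; the logical skeleton is identical.
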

\begin{proof}
The Orlicz space  $L_\Psi$ with function $\Psi$ satisfying conditions $\delta_2$ and $\Delta_2$ is minimal \cite{bs, kps, ssmf}.
Since the space $L_\Psi$ is fully symmetric, it is interpolation space \cite{bs, kps}
 and hence $T$ leaves $L_\Psi$ invariant and acts on $L_\Psi$ as a contraction. 
Then we are in the assumptions of the  Corollary \ref{col:1}, and hence  $S_n(T)x$ converges d.a.e. .
\end{proof}

%\subsection{Case of Besicovich sequences}

%\subsection{ }

%\subsection{Convergence of martingales}
%In this section we formulate 

\end{document}